\newtheorem{theorem}{Theorem}[section]
\newtheorem{lemma}[theorem]{Lemma}
\theoremstyle{definition}
\newtheorem{remark}[theorem]{Remark}
\numberwithin{equation}{section} \textwidth=16cm \textheight=21cm
\begin{document}
\title{Localization of critical  points in annular conical sets via the method of Nehari manifold}
\author{Andrei Stan}
\address{
Department of Mathematics, Babeș-Bolyai
University AND Tiberiu Popoviciu
Institute of Numerical Analysis, Romanian Academy}
\maketitle

\section{Introduction and Preliminaries}

 The theory of critical points has proved to be a cornerstone in the study of various problems arising from real-world mathematical models. In many cases, the solutions to such problems  correspond to a critical point of some   $C^1$ energy functional \( E \). Specifically, one seeks \( u  \) such that 
\[
E'(u) = 0,
\]  
where $E'$ denotes the Fréchet derivative of $E$. 
In the upcoming discussion, \( H \) denotes a Hilbert space, identified with its dual, equipped with the inner product \((u, v)_H\) and the associated norm \(|u|_H = \sqrt{(u, u)_H}\). Also,  \( B_R \) denotes the closed ball of radius \( R \) centered at the origin in \( H \). 

\subsection{Nehari manifold}

A significant advancement in the study of critical points is attributed to the pioneering papers of Nehari (see \cite{nehari original1, nehari original2}). The core idea involves minimizing the functional \( E \) over the so-called Nehari   manifold,  
\begin{equation*}\label{nehari}
    \mathcal{M} = \left\{ u \in H \setminus \{ 0 \} \, : \, (E'(u), u)_H = 0 \right\},
\end{equation*} 
since in many cases the functional \( E \) is bounded on \( \mathcal{M} \), even though it may be unbounded on the entire domain. Moreover, by its  definition, all critical points of \( E \) are contained within \( \mathcal{M} \). 
Typically, the Nehari method proceeds as follows,\begin{itemize}
    \item[1)] Prove that the infimum of \( E \) over \( \mathcal{M} \) is achieved, i.e., there exists \( u_0 \in \mathcal{M} \) such that \( E(u_0) = c = \inf_{\mathcal{N}} E(\cdot) \).
    \item[2)] Establish that \( \mathcal{M} \) is homeomorphic to the unit sphere \( S^1 \) in \( H \) via a mapping \( \omega \). Then, show that \( u \) is a critical point of the mapping \( E \circ \omega \colon S^1 \to \mathbb{R} \). In the standard approach from  the literature, the mapping $\omega$ is obtained as the unique maximizer  of the fibering mapping $(0, \infty)\ni t\mapsto E(tu)$ (see, e.g., \cite{pohozaev}).
    \item[3)] Show  that any critical point of  \( E \circ \omega \) corresponds to a critical point of \( E \).
\end{itemize}
The point $u_0$ is usually called a ground state since it minimizes the functional over the set of all possible solutions. However, 
as shown in \cite[Theorem~4.2]{minimax}, $E(u_0)$ is just a  mountain pass value given by a min-max procedure over a specific set of paths.

In cases where \( \mathcal{M} \) is a \( C^1 \) manifold (e.g., when \( E \) is  a \( C^2 \) functional), the mapping \( \omega \) is of \( C^1 \) class, allowing us to work directly on the manifold \( \mathcal{M} \). For further details, we refer the reader to the remarkable monograph  by Szulkin and Weth \cite{nehari}. Additionally, some excellent sources include \cite{ambroseti,figueredo} and the references therein.

\subsection{Localization} 

In addition to establishing the existence of a critical point, one may also ask for its localization or, more generally, aim to identify a critical point within a specific subset. A localization result, particularly with respect to an energetic norm, may be of interest because, for example, when modeling a real process, the parameters of the nonlinear term can be adjusted to obtain a solution whose energy lies within predefined limits. Another motivation for localization arises in nonlinear problems, where the uniqueness of solutions may fail. In such cases, it is often important to focus the analysis on a specific solution, which requires localization.

In general, to achieve localization results for nonlinear problems, one typically employs a combination of inequalities, such as Poincaré's and Harnack's inequalities, alongside abstract localization techniques, e.g.,  Krasnoselskii-type methods or approaches that use the properties of topological degree.  

One of the earliest results on the localization of critical points can be found in Schechter \cite[Chapter 5.3]{schecter}, which provides sufficient conditions to ensure the existence of a minimizing sequence of a functional within a given ball. Under the assumption of the Palais-Smale condition, this minimizing sequence has a convergent subsequence whose limit is a critical point.
\begin{theorem}[Schechter's Theorem]  \label{Schechter}
   Assume \( E \) is bounded from below on \( B_R \) (\( R > 0 \)) and the following Leray-Schauder  boundary condition holds \begin{equation*}
       E'(u)+\mu u\neq 0\, \, \text{ for all }u\in \partial B_R \text{ and }\mu>0.
   \end{equation*}  Then, there exists a sequence \( \{u_n\} \subset B_R \) such that \begin{equation*}
       \text{\( E(u_n) \to \inf_{B_{R}} E \) \quad and  \quad $ E'(u_n) \to 0 $}.
   \end{equation*} 
\end{theorem}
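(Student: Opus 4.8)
The natural route is a constrained minimizing deformation, run by contradiction. Put $c:=\inf_{B_R}E$, which is finite by hypothesis, and suppose no sequence as in the conclusion exists. Then there are $\varepsilon>0$ and $\delta>0$ such that
\[
|E'(u)|_H\ge \delta\qquad\text{for all }u\in\mathcal A:=\{\,u\in B_R:\ E(u)<c+\varepsilon\,\},
\]
for otherwise taking $\varepsilon=\delta=1/n$ would already produce the forbidden sequence; since $E\ge c$ on $B_R$, the set $\mathcal A$ contains every near-minimizer. The plan is to flow a near-minimizer ``downhill'' while staying inside $B_R$ until the value of $E$ falls below $c$, which is absurd. (Ekeland's variational principle on the complete metric space $B_R$ gives the same reduction, producing $u_n\in B_R$ with $E(w)\ge E(u_n)-\tfrac1n|w-u_n|_H$ for all $w\in B_R$, and leads to the same boundary analysis below.)

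The heart of the matter is to construct a locally Lipschitz vector field $X\colon\mathcal A\to H$ with $|X(u)|_H\le 1$, with $(E'(u),X(u))_H>0$ on $\mathcal A$ (so $-X$ is a descent direction), and --- this is the crucial property --- with $(u,X(u))_H\ge 0$ whenever $|u|_H=R$, which makes $B_R$ positively invariant for the flow $\sigma'=-X(\sigma)$. Away from $\partial B_R$ one normalizes a pseudo-gradient of $E$. On $\partial B_R$ use the radial/tangential splitting $E'(u)=\lambda(u)\,u+z(u)$, where $\lambda(u)=R^{-2}(E'(u),u)_H$ and $(z(u),u)_H=0$. Where $\lambda(u)\ge 0$ the direction $E'(u)/|E'(u)|_H$ already has the required signs. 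Where $\lambda(u)<0$ --- i.e.\ the descent direction $-E'(u)$ points strictly outward --- the Leray-Schauder hypothesis enters decisively: $E'(u)\ne-\mu u$ for every $\mu>0$ forces $z(u)\ne 0$, and then $X(u):=z(u)/|z(u)|_H$ satisfies $(u,X(u))_H=0$ together with $(E'(u),X(u))_H=|z(u)|_H>0$; one glues the two prescriptions (and the interior one) by a partition of unity, keeping $X$ locally Lipschitz and nonvanishing on $\mathcal A$.

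With $X$ at hand, fix $u_0\in B_R$ with $E(u_0)<c+\varepsilon/2$ and let $\sigma$ solve $\sigma'=-X(\sigma)$, $\sigma(0)=u_0$. Since $|X|_H\le 1$ and $B_R$ is bounded and positively invariant, $\sigma$ is defined for all $t\ge 0$, stays in $\mathcal A\subset B_R$, and $t\mapsto E(\sigma(t))$ is strictly decreasing; it then suffices to show that $E(\sigma(t))<c$ for some $t$, contradicting the definition of $c$. I expect this last assertion, entangled with the behaviour of $X$ near $\partial B_R$, to be the real obstacle. In finite dimensions $\partial B_R$ is compact, $|z(u)|_H$ is bounded below where it is needed, $E$ decays along $\sigma$ at a uniformly positive rate, and the value is driven below $c$; but in a general Hilbert space $|z(u)|_H$ can degenerate toward the boundary (with the inward radial slope of $E$ unbounded), so the confined flow may crawl along $\partial B_R$ and one must argue more carefully that it cannot stall at level $c$. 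This delicate confinement-and-descent estimate near $\partial B_R$ is the technical core, handled by Schechter's flow method and using the Leray-Schauder condition in full.
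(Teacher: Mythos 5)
The paper does not actually prove Theorem~\ref{Schechter}; it is quoted from Schechter's monograph \cite{schecter} as background, so there is no in-paper argument to compare yours with. The closest relative inside the paper is the proof of Theorem~\ref{th principala}, which runs entirely through Ekeland's variational principle applied on the constraint set together with explicit admissible variations of the almost-minimizers --- the route you mention only in parentheses --- rather than through a deformation flow.

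Judged on its own terms, your proposal has a genuine gap, and it sits exactly where you place it. The contradiction requires a uniform descent rate: you need $\frac{d}{dt}E(\sigma(t))\leq-\rho<0$ for some $\rho$ independent of $t$ as long as $\sigma(t)\in\mathcal A$, so that $E(\sigma(t))$ drops below $c$ in finite time. In the interior the pseudo-gradient gives this with $\rho\sim\delta$, but on the portion of $\partial B_R\cap\mathcal A$ where $\lambda(u)<0$ your field yields $(E'(u),X(u))_H=|z(u)|_H$, and the pointwise Leray--Schauder condition only guarantees $z(u)\neq0$ at each such point. Since $\partial B_R$ is not compact in an infinite-dimensional $H$, there is no uniform positive lower bound on $|z(u)|_H$ (note $|z(u)|_H^2=|E'(u)|_H^2-\lambda(u)^2R^2$, which the bound $|E'(u)|_H\geq\delta$ does not control when $\lambda(u)$ is very negative), so the flow may indeed stall above level $c$ and no contradiction is reached. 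Writing that this step is ``handled by Schechter's flow method'' defers precisely the content of the theorem. The same obstruction reappears if you switch to Ekeland: testing the Ekeland inequality at a boundary point $u_n$ with the admissible directions $h-u_n$, $h\in B_R$, gives $R|E'(u_n)|_H+(E'(u_n),u_n)_H\leq 2R/n$, hence only $E'(u_n)+\mu_n u_n\to0$ with $\mu_n=\max\{0,-(E'(u_n),u_n)_H/R^2\}$; upgrading this to $E'(u_n)\to0$ requires showing $\mu_n\to0$ along a subsequence, and the pointwise condition $E'(u)+\mu u\neq0$ does not by itself forbid a sequence with $E'(u_n)+\mu_n u_n\to0$ and $\mu_n$ bounded away from zero. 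Supplying this boundary analysis is the actual proof, and it is missing.
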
  
In \cite[Theorem 3.1]{p1}, a result analogous to Schecter's theorem was established for the  set $\overline{B_R\setminus B_r}$ ($0<r<R$), while in \cite[Theorem 2.3]{p2}, a similar result was obtained for the conical set
\[
K_{r,R} = \left\{ u \in K \, : \, r \leq |u|_1 \text{ and } |u|_2 \leq R \right\} \quad (0<r<R<\infty),  
\]  
where \( |\cdot|_i \) (\( i = 1, 2 \)) are two norms on \( H \), and \( K \) is a wedge, i.e., \( K \) is a closed convex subset of \( H \) satisfying \( \lambda K \subset K \) for all \( \lambda > 0 \). 
Further results on the localization of critical points can be found in \cite{p3, precup, precup2, galewski}.

\subsection{Ekeland variational principle}

The proof of our main result is essentially based on the weak form of
Ekeland's variational principle (see, e.g., \cite{ekeland,f}).
\begin{lemma}[Ekeland Principle - weak form]
\label{ekeland} Let $(X,d)$ be a complete metric space and let $\Phi
:X\rightarrow \mathbb{R} \cup \{+\infty \}$ be a lower semicontinuous and
bounded from below functional. Then, given any $\varepsilon >0$, there exists $%
u_{\varepsilon }\in X$ such that
\begin{equation*}
\Phi (u_{\varepsilon })\leq \inf_{X}\Phi +\varepsilon,
\end{equation*}
and
\begin{equation*}
\Phi (u_{\varepsilon })\leq \Phi (u)+\varepsilon d(u,u_{\varepsilon }),
\end{equation*}
for all $u\in X.$
\end{lemma}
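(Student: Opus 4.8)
The plan is to prove the lemma by the classical iterative ``slicing'' argument, using only the completeness of $(X,d)$ together with the lower semicontinuity and the lower boundedness of $\Phi$. For each $u\in X$ put
\[
S(u)=\{\, v\in X : \Phi(v)+\varepsilon\, d(v,u)\le\Phi(u)\,\}.
\]
Two elementary facts will be used repeatedly. First, $S(u)$ is nonempty, since $u\in S(u)$, and closed, since $v\mapsto\Phi(v)+\varepsilon\, d(v,u)$ is lower semicontinuous; moreover $\inf_X\Phi\le\inf_{S(u)}\Phi\le\Phi(u)$, so $\inf_{S(u)}\Phi$ is finite as soon as $\Phi(u)<+\infty$. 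Second, if $v\in S(u)$ then $\Phi(v)\le\Phi(u)$ and, by the triangle inequality, $S(v)\subseteq S(u)$.

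First I would construct a sequence $(u_n)_{n\ge 0}$. Choose $u_0\in X$ with $\Phi(u_0)\le\inf_X\Phi+\varepsilon$ (possible since the infimum is finite), and, given $u_n$, pick $u_{n+1}\in S(u_n)$ with $\Phi(u_{n+1})\le\inf_{S(u_n)}\Phi+2^{-(n+1)}$. By the second fact the sets $S(u_n)$ decrease, and $(\Phi(u_n))_n$ is nonincreasing and bounded below, hence convergent to some $\ell\ge\inf_X\Phi$. Since $u_m\in S(u_n)$ whenever $m\ge n$ (iterating the inclusion $S(u_{k+1})\subseteq S(u_k)$), we obtain, for $m\ge n$,
\[
\varepsilon\, d(u_m,u_n)\le\Phi(u_n)-\Phi(u_m)\le\Phi(u_n)-\ell,
\]
and the right-hand side tends to $0$ as $n\to\infty$; thus $(u_n)$ is Cauchy and, by completeness, converges to some $u_\varepsilon\in X$. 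As each $S(u_n)$ is closed and contains the tail $\{u_m:m\ge n\}$, we get $u_\varepsilon\in S(u_n)$ for every $n$; in particular $\Phi(u_\varepsilon)\le\Phi(u_n)$ for all $n$, so $\Phi(u_\varepsilon)\le\ell$, and taking $n=0$ gives $\Phi(u_\varepsilon)\le\Phi(u_0)\le\inf_X\Phi+\varepsilon$, which is the first assertion.

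It remains to verify the variational inequality, and this is the step I expect to be the main obstacle, since it is where the nested structure of the slices has to be combined with lower semicontinuity. Arguing by contradiction, assume $\Phi(u_\varepsilon)>\Phi(u)+\varepsilon\, d(u,u_\varepsilon)$ for some $u\in X$. Inserting $u_\varepsilon\in S(u_n)$ in the form $\Phi(u_\varepsilon)\le\Phi(u_n)-\varepsilon\, d(u_\varepsilon,u_n)$ and applying the triangle inequality $d(u,u_n)\le d(u,u_\varepsilon)+d(u_\varepsilon,u_n)$, one gets $\Phi(u)+\varepsilon\, d(u,u_n)<\Phi(u_n)$, hence $u\in S(u_n)$, and therefore $\Phi(u)\ge\inf_{S(u_n)}\Phi\ge\Phi(u_{n+1})-2^{-(n+1)}$ for every $n$. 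Letting $n\to\infty$ yields $\Phi(u)\ge\ell\ge\Phi(u_\varepsilon)$, which contradicts the assumption $\Phi(u_\varepsilon)>\Phi(u)+\varepsilon\, d(u,u_\varepsilon)\ge\Phi(u)$. Hence $\Phi(u_\varepsilon)\le\Phi(u)+\varepsilon\, d(u,u_\varepsilon)$ for all $u\in X$, finishing the proof. An alternative route to the same conclusion is to apply Zorn's lemma to the partial order $u\preceq v\iff\Phi(u)+\varepsilon\, d(u,v)\le\Phi(v)$ on the closed, hence complete, set $\{v\in X:\Phi(v)\le\inf_X\Phi+\varepsilon\}$: a minimal element of this order is precisely the point sought, and the only nontrivial point is that each chain admits a lower bound, which once more relies on the Cauchy-type estimate displayed above.
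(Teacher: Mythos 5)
Your proof is correct. Note that the paper itself does not prove this lemma at all: it is quoted as a known result with references to Ekeland's original article and de Figueiredo's lecture notes, so there is no in-paper argument to compare against. What you give is precisely the standard iterative ``slicing'' proof found in those sources: the sets $S(u)$ are nonempty, closed, nested under the relation $v\in S(u)\Rightarrow S(v)\subseteq S(u)$, the almost-minimizing selection forces the diameters to shrink via $\varepsilon\,d(u_m,u_n)\le \Phi(u_n)-\ell$, and the limit point inherits membership in every slice by closedness; the contradiction argument for the variational inequality is also the classical one. The only cosmetic point is that your choice of $u_0$ tacitly assumes $\Phi\not\equiv+\infty$ (i.e., $\Phi$ is proper), which is the usual implicit convention for this statement and is harmless here since the lemma is vacuous otherwise. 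The Zorn's lemma alternative you sketch at the end is a well-known equivalent formulation and is fine as an aside, though unnecessary.
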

\subsection{Implicit function theorem}

In our analysis, we employ the following variant of the implicit function theorem.  For another use of the implicit function theorem in the context of Nehari manifolds, see \cite[Proposition 4.2]{bahri}.
\begin{theorem}
    Let $A,B$ be open sets in $\mathbb{R}$, and let  \( x_0 \in \text{Int } A \), \( y_0 \in \text{Int } B \). Suppose that \( \mathcal{F}(y,x)\colon B\times A\to \mathbb{R} \) is a  function that satisfies the following conditions:
\begin{enumerate}
    \item \( \mathcal{F}( y_0,x_0) = 0 \);
    \item The function $\mathcal{F}(y,x)$ is continuously  differentiable with respect to both variables in a neighborhood of $(y_0,x_0)$;
    \item \( \mathcal{F}_y( y_0,x_0) \neq 0 \);
\end{enumerate}
Then:
\begin{enumerate}
    \item[(a)] There exist  neighborhoods \( U_0\subset A \) of \( x_0 \) and  \( V_0\subset B \) of \( y_0 \), as well as a unique mapping \( y = \xi(x) \colon U_0 \to V_0 \) such that:
    \[
    \xi(x_0) = y_0 \quad \text{and} \quad \mathcal{F}( \xi(x),x) = 0 \quad \text{for all } x \in U_0;
    \]
    \item[(b)] The mapping  $\xi$ is continuously differentiable  on \( U_0 \) and moreover satisfies
    \[
    \xi'(x) = -\frac{\mathcal{F}_{x}(\xi(x),x)}{\mathcal{F}_y( \xi(x),x)} \, \, \text{ for all }x\in U_0.
    \]
\end{enumerate}
\end{theorem}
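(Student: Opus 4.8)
The plan is to reduce the whole statement to the one-dimensional intermediate value theorem together with strict monotonicity, which is available precisely because we are working on $\mathbb{R}$. First I would assume, without loss of generality, that $\mathcal{F}_y(y_0,x_0) > 0$ (otherwise replace $\mathcal{F}$ by $-\mathcal{F}$, which changes nothing in the conclusions). By hypothesis (2) and the continuity of $\mathcal{F}_y$, I can choose $\alpha,\beta>0$ so small that the closed rectangle $Q = [y_0-\beta,\,y_0+\beta]\times[x_0-\alpha,\,x_0+\alpha]$ lies inside the neighborhood of $(y_0,x_0)$ on which $\mathcal{F}$ is $C^1$, and so that $\mathcal{F}_y>0$ throughout $Q$. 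Consequently, for each fixed $x\in[x_0-\alpha,x_0+\alpha]$, the map $y\mapsto \mathcal{F}(y,x)$ is strictly increasing on $[y_0-\beta,y_0+\beta]$.

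Using (1) and strict monotonicity in $y$ we get $\mathcal{F}(y_0-\beta,x_0)<0<\mathcal{F}(y_0+\beta,x_0)$; by continuity of $\mathcal{F}$ in $x$ I may shrink $\alpha$ further so that $\mathcal{F}(y_0-\beta,x)<0<\mathcal{F}(y_0+\beta,x)$ for every $x\in[x_0-\alpha,x_0+\alpha]$. Put $U_0=(x_0-\alpha,x_0+\alpha)$ and $V_0=(y_0-\beta,y_0+\beta)$. For each $x\in U_0$ the intermediate value theorem yields a point of $V_0$ at which $\mathcal{F}(\cdot,x)$ vanishes, and strict monotonicity makes this point unique; denote it $\xi(x)$. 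Uniqueness also forces $\xi(x_0)=y_0$, so part (a) follows. Continuity of $\xi$ at an arbitrary $x_1\in U_0$ is obtained by the same device: for small $\varepsilon>0$ with $[\xi(x_1)-\varepsilon,\xi(x_1)+\varepsilon]\subset V_0$, strict monotonicity gives a strict sign change of $\mathcal{F}(\cdot,x_1)$ at the two endpoints, and by continuity of $\mathcal{F}$ in $x$ this sign change persists for $x$ close to $x_1$, trapping $\xi(x)$ inside $(\xi(x_1)-\varepsilon,\xi(x_1)+\varepsilon)$.

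For part (b), fix $x_1\in U_0$, set $y_1=\xi(x_1)$, and for $x\in U_0$ write $y=\xi(x)$. Since $\mathcal{F}(y,x)=\mathcal{F}(y_1,x_1)=0$, splitting the increment as $[\mathcal{F}(y,x)-\mathcal{F}(y_1,x)]+[\mathcal{F}(y_1,x)-\mathcal{F}(y_1,x_1)]$ and applying the mean value theorem in each variable separately gives
\[
0 = \mathcal{F}_y(\eta,x)\,(y-y_1) + \mathcal{F}_x(y_1,\zeta)\,(x-x_1)
\]
for some $\eta$ between $y_1$ and $y$ and some $\zeta$ between $x_1$ and $x$, both lying in $Q$. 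Dividing by $x-x_1$ and using $\mathcal{F}_y\neq 0$ on $Q$ produces the difference quotient of $\xi$. Letting $x\to x_1$, continuity of $\xi$ forces $y\to y_1$, hence $\eta\to y_1$ and $\zeta\to x_1$, and continuity of $\mathcal{F}_x,\mathcal{F}_y$ yields
\[
\xi'(x_1) = -\frac{\mathcal{F}_x(\xi(x_1),x_1)}{\mathcal{F}_y(\xi(x_1),x_1)} .
\]
Since the right-hand side is continuous in $x_1$ (again by continuity of $\xi$, $\mathcal{F}_x$, $\mathcal{F}_y$), $\xi$ is continuously differentiable on $U_0$.

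There is no serious obstacle here: the result is classical, and the one-dimensional setting makes it especially clean, since monotonicity plus the intermediate value theorem replace any fixed-point or contraction argument. The only points needing a little care are bookkeeping: shrinking $\alpha,\beta$ so that $Q$ stays inside the $C^1$-region and so that $\mathcal{F}_y$ keeps a fixed sign there, and checking in step (b) that the mean-value points $\eta,\zeta$ remain in $Q$ so that continuity of the partials can legitimately be invoked. A marginally slicker route for (b) is to invoke the $C^1$ differentiability of $\mathcal{F}$ directly and write $\mathcal{F}(y,x)-\mathcal{F}(y_1,x_1)$ as a first-order expansion with remainder $o(|y-y_1|+|x-x_1|)$, but the mean value version keeps the argument self-contained.
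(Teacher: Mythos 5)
Your proof is correct. Note, however, that the paper states this implicit function theorem only as a preliminary tool and gives no proof of it at all, so there is nothing to compare against: it is treated there as a classical result. Your argument is the standard one-dimensional proof — fix the sign of $\mathcal{F}_y$ on a closed rectangle $Q$, use strict monotonicity in $y$ plus the intermediate value theorem to define $\xi$ and get uniqueness, trap $\xi(x)$ between sign changes to get continuity, and then split the increment and apply the mean value theorem in each variable separately to compute $\xi'$. All the delicate points are handled: the zero lands in the open interval $V_0$ because the endpoint values have strict opposite signs, uniqueness of the zero forces $\xi(x_0)=y_0$, and the mean-value points $\eta$ and $\zeta$ remain in $Q$ (since $\eta$ lies between $\xi(x_1)$ and $\xi(x)$, both in $V_0$, and $\zeta$ lies between $x_1$ and $x$, both in $U_0$), so continuity of the partials may legitimately be invoked in the passage to the limit. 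The concluding observation that $\xi'$ is continuous because it is a quotient of continuous functions composed with the continuous $\xi$ completes part (b). This is a clean, self-contained proof of the stated variant.
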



\section{Main Result}  

  Let \( E : H \to \mathbb{R} \) be a twice Fréchet differentiable functional, and let $K$ be a nondegenerate cone in $H$, i.e., \begin{equation*}
      \text{$K\setminus\left\{0\right\}\neq \emptyset$, $\mathbb{R}_+ K\subset K$ and $K+K\subset K$.}
  \end{equation*} The second derivative of $E$  at $x$ in the direction $y$ on the point $z$ is  (see, e.g., \cite{gateaux}):
    \begin{equation*}
        E''(x)(z,y)=\lim_{t\searrow 0}\frac{1}{t} \left( E'(x+ty)-E'(x),z\right)_H.
    \end{equation*}  
Throughout this paper, we assume that the operator $N\colon H\to H$ given by   \begin{equation*}\label{N}
    N(u)=u-E'(u) \, \, \text{ for all }u\in H,
\end{equation*} 
    is invariant over $K$, i.e., $N(K)\subset K$ (recall that $H$ is identified with its duals, so $E': H \to H$).

Our aim is to determine a  critical point of $E$ within the conical set 
\begin{equation*}
   \label{conical set}
   K_{r,R}=\left\{
   u\in K\setminus\{0\}\,: \, r\leq |u|_H\leq R 
   \right\},
\end{equation*}
where $0<r<R<\infty$ are some given real numbers. 
The main assumption we consider on the functional $E$ is the following:
\begin{description}  
\item[\textbf{(h1):}] For each \( u \in K \setminus \{ 0 \} \), there exists a unique \( s(u) \in \left( \frac{r}{|u|_H}, \frac{R}{|u|_H} \right) \) such that the mapping  
\[
\tau \mapsto \left( E'( \tau u), u \right)_H \quad (\tau > 0),
\]  
is strictly positive on \( \left[ \frac{r}{|u|_H}, s(u) \right) \) and strictly negative on \( \left( s(u), \frac{R}{|u|_H} \right] \).  
\end{description}  
From this, it follows that the mapping \(\alpha_u(\tau) = E(\tau u)\), for some \(u \in K\setminus\{0\}\), has a unique critical point within the interval \(\left( \frac{r}{|u|_H}, \frac{R}{|u|_H} \right)\) at \(\tau = s(u)\), and is concave at this point. Thus, given the smoothness of the functional \(E\), these properties are characterized by
\begin{equation*}\label{caracterizare puncte de pe varietate}
\alpha_u^{\prime}(s(u)) = 0 \quad \text{and} \quad \alpha_u^{\prime\prime}(s(u)) \leq 0.    
\end{equation*}
Moreover, we see that $s(ku)=\frac{s(u)}{k}$ for all  $k>0$.

Following the method of Nehari manifold, we look for critical points of $E$ on $K_{r,R}$ within the conical shell
\begin{equation*}
    \Tilde{\mathcal{N}}=\left\{ u \in K_{r,R}\, : \, \left(E'(u),u \right)_H=0\right\}.
\end{equation*}
Further, let $\mathcal{N}$ be the set
\begin{equation*}
   \mathcal{N}= \left\{ s(u)u \, : \, u\in K\setminus \{0\}\right\},
\end{equation*}
and observe that $\mathcal{N}= \Tilde{\mathcal{N}}$. To see this, from
 (h1) one clearly has  $ \mathcal{N}\subset \Tilde{\mathcal{N}}.$ Conversely,  if  $(E'(u),u)_H=0$ for some $u\in K\setminus\{0\}$, then $s(u)=1$, which implies $u\in \mathcal{N}$.

The main result of this paper is presented in \Cref{th principala} below, where we establish an analogue of the results presented in \cite{p2}, obtained using  the method of Nehari manifold.  \begin{remark}
    We mention that in \cite{figueredo}, the existence of a ground state for a $C^1 $ functional was established over an open cone in Banach spaces. However, our result do not require this condition, and in applications, the cones often have empty interior (as the one from Section 3).
\end{remark}
\begin{theorem}\label{th principala}  
Assume condition (h1) holds. In addition, we suppose that
\begin{description}
\item[\textbf{(h2):}] The functional \( E \) is bounded from below on $\mathcal{N}$. 
\item[\textbf{(h3):}] The second Fréchet derivative $E'' (u)$ is bounded uniformly with respect to $u\in \mathcal{N}$, i.e., there exists $C_1>0$ such that
\[
\sup_{\substack{w_1, w_2 \in H  \\ |w_1|_H = |w_2|_H = 1}} \left|E''(u)(w_1, w_2) \right|\leq C_1 \, \, \text{ for all }u\in \mathcal{N}.
\]
\item[\textbf{(h4):}]  There is a positive constant $C_2>0$ satisfying \begin{equation*}
    |E''(u)(u,u)|\geq C_2>0 \, \, \text{ for all }u\in \mathcal{N}.
\end{equation*}
\end{description}
Then, there exists a sequence \( \{u_n\} \subset \mathcal{N} \) such that
\[
E(u_n) \to \inf _{\mathcal{N}} E \quad \text{ and }\quad E'(u_n) \to 0.
\]  
\end{theorem}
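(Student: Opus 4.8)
The plan is to run the Nehari scheme relative to the ``unit sphere'' of the cone, produce a minimizing sequence for the reduced functional via Ekeland's principle, and then convert it into the required Palais--Smale sequence. Put $\Sigma=\{v\in K:|v|_H=1\}$; this set is nonempty (since $K\setminus\{0\}\neq\emptyset$ and $\mathbb{R}_+K\subset K$) and, being a closed subset of $H$, is a complete metric space. Define $\omega\colon\Sigma\to H$, $\omega(v)=s(v)v$, and $\psi=E\circ\omega\colon\Sigma\to\mathbb{R}$. Using the homogeneity $s(kv)=s(v)/k$ one checks that $\omega(\Sigma)=\mathcal{N}$, so $\inf_\Sigma\psi=\inf_\mathcal{N}E$, which by (h2) is finite. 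I would then show that $s$ --- hence $\psi$ and $\omega$ --- is continuous on $\Sigma$: if $v_k\to v$ in $\Sigma$, then $s(v_k)\in[r,R]$, so along a subsequence $s(v_k)\to\ell\in[r,R]$; passing to the limit in $(E'(s(v_k)v_k),v_k)_H=0$ gives $(E'(\ell v),v)_H=0$, and the strict sign conditions of (h1) force $\ell=s(v)$; as every subsequence has a further subsequence with this property, $s(v_k)\to s(v)$. Hence $\psi$ is continuous, and \Cref{ekeland} yields, for each $\varepsilon>0$, a point $v_\varepsilon\in\Sigma$ with $\psi(v_\varepsilon)\le\inf_\Sigma\psi+\varepsilon$ and $\psi(v_\varepsilon)\le\psi(w)+\varepsilon|w-v_\varepsilon|_H$ for all $w\in\Sigma$. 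Setting $u_\varepsilon=\omega(v_\varepsilon)\in\mathcal{N}$ we get $E(u_\varepsilon)\to\inf_\mathcal{N}E$ as $\varepsilon\to0$, so it remains to prove $E'(u_\varepsilon)\to0$.

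The idea is to test the Ekeland inequality along a carefully chosen curve in $\Sigma$ running from $v_\varepsilon$ towards the normalized value of $N$ at $u_\varepsilon$. First, $(E'(u_\varepsilon),u_\varepsilon)_H=0$ gives $|N(u_\varepsilon)|_H^2=|u_\varepsilon-E'(u_\varepsilon)|_H^2=|u_\varepsilon|_H^2+|E'(u_\varepsilon)|_H^2\ge r^2>0$, so $N(u_\varepsilon)\neq0$, and $N(u_\varepsilon)\in K$ because $N(K)\subset K$. Since $K$ is a cone, the curve
\[
w_t=(1-t)\,v_\varepsilon+t\,\frac{N(u_\varepsilon)}{|N(u_\varepsilon)|_H}
\]
lies in $K$ for $t\in[0,1]$, and $w_t\neq0$ for small $t$, so $v_t:=w_t/|w_t|_H\in\Sigma$. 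To use the Ekeland inequality along $t\mapsto v_t$ I need $t\mapsto\psi(v_t)=E(s(v_t)v_t)$ differentiable at $t=0$, which amounts to differentiability of $t\mapsto s(v_t)$. This is where the implicit function theorem is applied, to $\mathcal{F}(y,x):=(E'(yv_x),v_x)_H$ near $(s(v_\varepsilon),0)$: we have $\mathcal{F}(s(v_\varepsilon),0)=(E'(u_\varepsilon),v_\varepsilon)_H=0$; $\mathcal{F}$ is $C^1$ by the smoothness of $E$, with (h3) providing the needed uniform bounds on the second derivative along $\mathcal{N}$; and
\[
\mathcal{F}_y(s(v_\varepsilon),0)=E''(u_\varepsilon)(v_\varepsilon,v_\varepsilon)=\frac{E''(u_\varepsilon)(u_\varepsilon,u_\varepsilon)}{s(v_\varepsilon)^2}\neq0
\]
by (h4). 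The theorem produces a $C^1$ map $\xi$ with $\xi(0)=s(v_\varepsilon)$ and $(E'(\xi(x)v_x),v_x)_H=0$; since $\xi(x)\in(r,R)$ for small $x$, uniqueness in (h1) forces $\xi(x)=s(v_x)$, so $t\mapsto\psi(v_t)$ is $C^1$ near $0$.

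Now set $\sigma(t)=s(v_t)$ and differentiate. By the chain rule,
\[
\tfrac{d}{dt}\psi(v_t)\big|_{t=0}=\bigl(E'(u_\varepsilon),\,\sigma'(0)v_\varepsilon+\sigma(0)v_0'\bigr)_H=s(v_\varepsilon)\,(E'(u_\varepsilon),v_0')_H,
\]
the $\sigma'(0)$-term vanishing because $(E'(u_\varepsilon),v_\varepsilon)_H=0$. Writing $v_0'=w_0'-(w_0',v_\varepsilon)_H v_\varepsilon$ with $w_0'=N(u_\varepsilon)/|N(u_\varepsilon)|_H-v_\varepsilon$ and using again $(E'(u_\varepsilon),v_\varepsilon)_H=0$, one gets $(E'(u_\varepsilon),v_0')_H=(E'(u_\varepsilon),N(u_\varepsilon))_H/|N(u_\varepsilon)|_H$. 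The algebraic heart of the argument is the identity on $\mathcal{N}$: since $E'(u_\varepsilon)=u_\varepsilon-N(u_\varepsilon)$, $(E'(u_\varepsilon),u_\varepsilon)_H=0$ and $|N(u_\varepsilon)|_H^2=|u_\varepsilon|_H^2+|E'(u_\varepsilon)|_H^2$, expanding yields
\[
(E'(u_\varepsilon),N(u_\varepsilon))_H=|u_\varepsilon|_H^2-|N(u_\varepsilon)|_H^2=-\,|E'(u_\varepsilon)|_H^2.
\]
Rewriting the Ekeland inequality as $\psi(v_t)-\psi(v_\varepsilon)\ge-\varepsilon|v_t-v_\varepsilon|_H$, dividing by $t>0$ and letting $t\downarrow0$, and noting $|v_0'|_H\le|w_0'|_H\le2$, we obtain
\[
\frac{s(v_\varepsilon)\,|E'(u_\varepsilon)|_H^2}{|N(u_\varepsilon)|_H}\le\varepsilon\,|v_0'|_H\le2\varepsilon.
\]
Since $s(v_\varepsilon)=|u_\varepsilon|_H\ge r$ and $|N(u_\varepsilon)|_H=\sqrt{|u_\varepsilon|_H^2+|E'(u_\varepsilon)|_H^2}\le\sqrt{R^2+|E'(u_\varepsilon)|_H^2}$, a short case distinction according to whether $|E'(u_\varepsilon)|_H\le R$ or not shows $|E'(u_\varepsilon)|_H\to0$ as $\varepsilon\to0$. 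Taking $\varepsilon=1/n$ and $u_n=u_{1/n}$ finishes the proof.

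I expect the regularity step of the second paragraph --- establishing that the fibering parameter $s$ is differentiable along the test curve --- to be the main obstacle, and it is precisely there that the hypotheses interact: (h1) provides the uniqueness that identifies the implicit-function solution with $s$, (h4) supplies the non-degeneracy $\mathcal{F}_y\neq0$, and (h3) keeps the derivatives of $\mathcal{F}$ under uniform control. Everything else is a careful but routine combination of the Ekeland inequality with the identity $(E'(u),N(u))_H=-|E'(u)|_H^2$ valid on $\mathcal{N}$, together with the elementary estimates coming from the localization $|u|_H\in[r,R]$.
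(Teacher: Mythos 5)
Your proof is correct, and it takes a genuinely different route from the paper. The paper applies Ekeland's principle directly to $E$ on $\mathcal{N}$ with the ambient metric of $H$, and tests the resulting inequality along the projected curve $t\mapsto s(\varphi(t))\varphi(t)$ with $\varphi(t)=u_n-tE'(u_n)=(1-t)u_n+tN(u_n)$; you instead apply Ekeland to the reduced functional $E\circ\omega$ on the spherical section $\Sigma=K\cap\{|v|_H=1\}$ and test along the normalized segment toward $N(u_\varepsilon)/|N(u_\varepsilon)|_H$ --- the same geometric direction, but parametrized on the sphere. The technical core is shared: both arguments differentiate the fibering parameter $s$ along an admissible curve via the implicit function theorem, using (h4) for the nondegeneracy $\mathcal{F}_y\neq 0$ and (h1) to identify the implicit solution with $s$ (this is exactly the content of the paper's Lemma \ref{lema_principala}), and both exploit the identity $(E'(u),N(u))_H=-|E'(u)|_H^2$ on $\mathcal{N}$. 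The real difference is in how the Ekeland penalty term is controlled. In the paper the penalty is $\frac{1}{n}|\psi'_+(0)|_H$ with $\psi'_+(0)=z_nu_n-E'(u_n)$, so the derivative $z_n$ of $s$ must be estimated quantitatively, and this is precisely where the uniform bound $C_1$ of (h3) enters. In your version the penalty is $\varepsilon|v_0'|_H\le 2\varepsilon$, measured on the sphere and independent of $s$, while the $\sigma'(0)$-term in the chain rule is annihilated by $(E'(u_\varepsilon),v_\varepsilon)_H=0$; consequently the constant $C_1$ never appears and (h3) is used at most qualitatively, so your argument in fact yields the conclusion under (slightly) weaker hypotheses. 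The price you pay is the extra verification that $s$ is continuous on $\Sigma$ so that $E\circ\omega$ is lower semicontinuous (the paper instead invokes closedness of $\mathcal{N}$), and, as in the paper, both arguments implicitly assume $K$ is closed so that the underlying metric space is complete.
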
  
The following auxiliary result will be of great importance in proving Theorem \ref{th principala}.
\begin{lemma}  \label{lema_principala}
    Let $u\in \mathcal{N}$, $v\in H$ and let $\varepsilon>0$ be such that  $$u+tv\in K\, \, \text{ for  all $t\in [0,\varepsilon]$. }$$ If assumption (h4) holds true, then the limit \begin{equation*}
        \lim_{t\searrow 0} \frac{s(u+tv)-s(u)}{t}
    \end{equation*} 
    exists and has the value \begin{equation*}
        -\frac{E'' (u)(u,v)+(E'(u),v)_H}{E'' (u)(u,u)}.
    \end{equation*}
\end{lemma}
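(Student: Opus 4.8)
My plan is to realize $s(u+tv)$ as a branch of solutions of an implicit scalar equation and to differentiate that branch by means of the implicit function theorem quoted above. Concretely, I would set
\[
\mathcal{F}(\sigma,t)=\bigl(E'(\sigma(u+tv)),\,u+tv\bigr)_H,
\]
regarded as a function of $(\sigma,t)$ on a small open rectangle $B\times A$ around $(1,0)$ in $\mathbb{R}^2$ on which it is defined. Since $u\in\mathcal{N}=\tilde{\mathcal{N}}$, one has $(E'(u),u)_H=0$, hence $s(u)=1$ and $\mathcal{F}(1,0)=0$, which is condition (1) of the implicit function theorem. Next I would differentiate under the inner product, using the very definition of $E''$ and the bilinearity of $E''(x)$, to get
\[
\mathcal{F}_\sigma(\sigma,t)=E''(\sigma(u+tv))(u+tv,u+tv),
\]
\[
\mathcal{F}_t(\sigma,t)=\sigma\,E''(\sigma(u+tv))(u+tv,v)+\bigl(E'(\sigma(u+tv)),v\bigr)_H,
\]
both continuous near $(1,0)$, so that $\mathcal{F}$ is of class $C^1$ there. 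In particular $\mathcal{F}_\sigma(1,0)=E''(u)(u,u)$, which is nonzero by (h4), and $\mathcal{F}_t(1,0)=E''(u)(u,v)+(E'(u),v)_H$; thus conditions (2)--(3) hold as well.

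Applying the implicit function theorem with $y=\sigma$, $x=t$, $y_0=1$, $x_0=0$ then produces an open interval $U_0\ni 0$, an open interval $V_0\ni 1$, and a $C^1$ map $\xi\colon U_0\to V_0$ with $\xi(0)=1$, $\mathcal{F}(\xi(t),t)=0$ on $U_0$, and
\[
\xi'(0)=-\frac{\mathcal{F}_t(1,0)}{\mathcal{F}_\sigma(1,0)}=-\frac{E''(u)(u,v)+(E'(u),v)_H}{E''(u)(u,u)}.
\]
The remaining — and, I expect, the only delicate — step is to identify $\xi(t)$ with $s(u+tv)$ for all sufficiently small $t\ge 0$. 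For such $t$ we have $u+tv\in K\setminus\{0\}$ (since $|u|_H\ge r>0$), and by (h1) the number $s(u+tv)$ is the \emph{unique} zero of $\sigma\mapsto\mathcal{F}(\sigma,t)$ in the closed interval $\bigl[\,r/|u+tv|_H,\;R/|u+tv|_H\,\bigr]$, because $\mathcal{F}(\cdot,t)$ is strictly positive to the left of it and strictly negative to the right. Now $\xi(0)=1=s(u)$ lies in the \emph{open} interval $\bigl(\,r/|u|_H,\;R/|u|_H\,\bigr)$, and the maps $t\mapsto\xi(t)$, $t\mapsto r/|u+tv|_H$, $t\mapsto R/|u+tv|_H$ are all continuous; hence $\xi(t)\in\bigl(\,r/|u+tv|_H,\;R/|u+tv|_H\,\bigr)$ for $t$ close enough to $0$. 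Since $\mathcal{F}(\xi(t),t)=0$, the uniqueness in (h1) forces $\xi(t)=s(u+tv)$.

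With this identification at hand, for small $t\ge 0$ we have $\dfrac{s(u+tv)-s(u)}{t}=\dfrac{\xi(t)-\xi(0)}{t}$, and letting $t\searrow 0$ the right-hand side converges to $\xi'(0)$ because $\xi$ is differentiable at $0$; this establishes both the existence of the limit and the announced value. To summarize the difficulty distribution: the computation of $\mathcal{F}_\sigma$, $\mathcal{F}_t$ and the verification of the hypotheses of the implicit function theorem are routine (with (h4) entering precisely to guarantee the nonvanishing of the denominator), while the single point that genuinely requires care is showing that the implicit branch coming out of the theorem is exactly the Nehari parameter — which is where the strictness of the sign conditions in (h1), together with the fact that $s(u)$ lies strictly between the two bounds, is used.
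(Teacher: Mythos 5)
Your proposal is correct and follows essentially the same route as the paper: the same implicit equation $\mathcal{F}(\sigma,t)=(E'(\sigma(u+tv)),u+tv)_H$, the same partial derivatives, (h4) for the nonvanishing of $\mathcal{F}_\sigma(1,0)=E''(u)(u,u)$, and the same continuity-plus-uniqueness argument from (h1) to identify the implicit branch with $t\mapsto s(u+tv)$ near $0$. No gaps.
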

\begin{proof}
Let us consider the mapping $g\colon (0, \infty)\times (-\varepsilon,\varepsilon) \to \mathbb{R}$ given by 
\[
g(\tau, t) = \left(E'(\tau(u + tv)), u + tv \right)_H.
\]   
Note that  $g$ is continuously differentiable, with partial derivatives 
    \begin{align}\label{derivata g 1}
        g'_\tau(\tau,t)&=\lim_{\delta\searrow 0}\frac{1}{\delta}\left(E'(\tau(u+tv)+\delta(u+tv))-E'(\tau(u+tv)), u+tv \right)_H
        \\&
        =E'' (\tau(u+tv))(u+tv,u+tv),\nonumber
    \end{align}
    and 
    \begin{align}\label{derivata g 2}
                g'_t(\tau,t)&=\lim_{\delta\searrow 0}\left(\frac{1}{\delta}\left(E'(\tau(u+tv)+\delta\tau v), u+tv+\delta v \right)_H-\left(E'(\tau(u+tv)), u+tv \right)_H\right)
                \\&
                =\left(E'(\tau(u+tv),v\right)_H+\lim_{\delta\searrow 0}\frac{1}{\delta}\left(E'(\tau(u+tv)+\delta
\tau v)-E'(\tau(u+tv)), u+tv \right)_H \nonumber
                \\&
=\left(E'(\tau(u+tv),v\right)_H+E''(\tau(u+tv))\nonumber (u+tv,\tau v).
    \end{align}
Additionally, we have
\begin{equation*}\label{conditii fct implicita}
g(1, 0) = 0 \quad \text{ and }\quad g'_\tau(1, 0)< 0.
\end{equation*}
Indeed,  the first relation  follows immediately since $s(u)=1$, while the second one follows from \eqref{derivata g 1} and (h4).

Employing the 
implicit function theorem, there exists $\varepsilon_0>0$ and a unique continuously differentiable mapping \(\xi \colon (-\varepsilon_0, \varepsilon_0) \to \mathbb{R}\) such that  $\xi(0) = 1$,
\[
 g(\xi(t),t)=0\quad \text{and} \quad \xi'(t) = -\frac{g'_t(\xi(t), t)}{g'_\tau(\xi(t), t)}\, \, \text{ for all }t\in (-\varepsilon_0, \varepsilon_0).
\]  

On the other hand, for each $t\in [0, \varepsilon]$, since \(u + tv \in K\), assumption \((h1)\) guarantees the existence of a unique value \(s(u+tv)\) within the interval \(\left(\frac{r}{|u+tv|_H}, \frac{R}{|u+tv|_H}\right)\) satisfying 
\[
(E'(s(u+tv)(u + tv)), u + tv)_H= 0,\quad \text{ i.e., } \quad g(s(u+tv),t)=0.
\] 
Since $\xi$ is smooth and $\xi(0)=1\in \left( \frac{r}{|u|_H},\frac{R}{|u|_H}\right)$, there exists $\varepsilon_1>0$ such that $$\xi(t)\in \left(\frac{r}{|u+tv|_H}, \frac{R}{|u+tv|_H}\right) \, \, \text{  for all $t\in (-\varepsilon_1,\varepsilon_1)$. }$$
Consequently,  we have
\begin{equation*}
    \xi(t)=s(u+tv) \, \, \text{ for all }t\in [0,\varepsilon_2),
\end{equation*}
where $\varepsilon_2=\min\{\varepsilon,\varepsilon_0,\varepsilon_1\}$.  
Whence 
 \begin{equation*}
        \lim_{t\searrow 0} \frac{s(u+tv)-s(u)}{t}=\xi'(0)=-\frac{g'_t(1, 0)}{g'_\tau(1, 0)}.
    \end{equation*} 
    Now, taking $\tau=1$ and $t=0$ in \eqref{derivata g 1} and \eqref{derivata g 2}, we derive the conclusion
     \begin{equation*}
        \lim_{t\searrow 0} \frac{s(u+tv)-s(u)}{t}=-\frac{E'' (u)(u,v)+(E'(u),v)_H}{E'' (u)(u,u)}.
    \end{equation*}
     
\end{proof}

\begin{proof}[Proof of Theorem \ref{th principala}]
From assumption (h2),  $E$ is bounded from below on $\mathcal{N}$. Moreover, since $\mathcal{N}$ is closed, we may apply Ekeland's variational principle to  the functional $E$ on the set $\mathcal{N}$.  This guarantees the existence of a sequence $\{u_n\}\subset  \mathcal{N}$ such that 
\begin{align}
E(u_n) &\leq \inf_{\mathcal{N}} E + \frac{1}{n}, \\
E(u_n) &\leq E(u) + \frac{1}{n} |u_n - u|_H\, \, \text{for all } u \in \mathcal{N}.\label{ekeland2}
\end{align} 
For each $n\in \mathbb{N}$, consider the mapping \(\varphi \colon \mathbb{R} \to H\) given by 
\[
\varphi(t) = u_n - tE'(u_n).
\]  
Clearly, $\varphi$ is continuously differentiable  with $\varphi'(0)=-E'(u_n)$. Moreover, we have 
 \(\varphi(t) \in K\) for all  \(t \in [0, 1]\). Indeed, since both $u_n\in K$ and \(N(u_n) \in K\), one has
\[
u_n - tE'(u_n) = (1-t)u_n + t\left(u_n-E'(u_n) \right)=(1-t)u_n+t N(u_n)\in K.
\]
Next,  we define 
\begin{equation*}
    \psi(t)=s(\varphi(t))\varphi(t)\in \mathcal{N} \, \,\text{ for all }t\in [0,1].
\end{equation*}
Choosing 
 \(u = \psi(t)\)  in \eqref{ekeland2} yields
\begin{equation}\label{ekeland3}
  E(\psi(0)) \leq E(\psi(t)) + \frac{1}{n}|\psi(t)-\psi(0)|_H\, \, \text{ for all }t\in [0,1].  
\end{equation}
By Lemma \ref{lema_principala},  the mapping $t \mapsto s(\varphi(t))$ has a right derivative at zero given by \begin{equation*}
    z_n= \frac{E''(u_n)(u_n, -E'(u_n)) - |E'(u_n)|_H^2}{E''(u_n)(u_n, u_n)}.
\end{equation*}
Thus, since $\varphi(0)=u_n$, when differentiating, we obtain
\begin{equation}\label{derivata psi}
    \psi'_+(0)=\lim_{t\searrow0} \frac{\psi(t)-\psi(0)}{t}=z_n \,u_n-E'(u_n).
\end{equation}
Regarding $z_n$, from assumption (h3) and (h4), one has
\begin{equation}\label{estimare z_n}
    |z_n|\leq \frac{C_1}{C_2} |u_n|_H |E'(u_n)|_H+\frac{1}{C_2}|E'(u_n)|^2 _H.
\end{equation}
Now, dividing  relation \eqref{ekeland3} by $t>0$ gives
\begin{align}\label{ekeland4}
    \frac{E(\psi(0))-E(\psi(t))}{t}\leq \frac{1}{n} \left|\frac{\psi(t)-\psi(0)}{t} \right|_H.
\end{align}
Taking the limit as $t\searrow 0$, from \eqref{derivata psi} and $\left( E'(u_n),u_n\right)_H=0$, the left-hand side of \eqref{ekeland4} becomes
\begin{align}\label{stanga}
    \lim_{t\searrow 0}\frac{E(\psi(0))-E(\psi(t))}{t}&=-\left(E'(\psi(0)),\psi'_+(0) \right)_H
    \\& =-\left(E'(u_n),z_n u_n-E'(u_n) \right)_H \nonumber
    \\&=|E'(u_n)|^2_H.\nonumber
\end{align}
For the right-hand side of \eqref{ekeland4}, using \eqref{derivata psi} and \eqref{estimare z_n}, we obtain
\begin{align}\label{dreapta}
   \lim_{t\searrow 0}   \frac{1}{n} \left|\frac{\psi(t)-\psi(0)}{t} \right|_H \leq \frac{C}{n}\left( |u_n|_H |E'(u_n)|_H+|E'(u_n)|^2 _H\right).
\end{align}
Consequently, from \eqref{ekeland4}, \eqref{stanga} and \eqref{dreapta}, we obtain 
\begin{equation*}
    |E'(u_n)|_H\leq \frac{C}{n}\left(|u_n|_H+|E'(u_n)| _H \right)\leq  \frac{C}{n}\left(R+|E'(u_n)| _H \right),
\end{equation*}
which ensures that $E'(u_n)\to 0$ as $n\to \infty$.

\end{proof}

If we further assume a compactness condition, Theorem \ref{th principala} leads to the following critical point principle.

\begin{theorem} Assume (h1)-(h4) hold true. If in addition the operator $N$ given in \eqref{N} is completely continuous, then there exists $u^\ast\in \mathcal{N}$ such that \begin{equation*}
  E(u^\ast)=\inf_\mathcal{N}E\quad \text{ and } \quad E'(u^\ast)=0.
\end{equation*}\label{corolar}
\end{theorem}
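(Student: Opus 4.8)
The plan is to combine the Palais--Smale-type sequence produced by Theorem~\ref{th principala} with the compactness furnished by complete continuity of $N$, and then verify that the resulting limit point, which lies in $\mathcal{N}$ by closedness, is a genuine critical point of $E$ with the minimal energy.

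First, I would invoke Theorem~\ref{th principala} to obtain a sequence $\{u_n\}\subset\mathcal{N}$ with $E(u_n)\to\inf_{\mathcal{N}}E$ and $E'(u_n)\to 0$. Since $E'(u_n)=u_n-N(u_n)$, this reads $u_n-N(u_n)\to 0$ in $H$. Next, because $\{u_n\}\subset\mathcal{N}\subset K_{r,R}$, the sequence is bounded ($|u_n|_H\le R$), so by complete continuity of $N$ the sequence $\{N(u_n)\}$ has a convergent subsequence, say $N(u_{n_k})\to u^\ast$ in $H$. Combining this with $u_{n_k}-N(u_{n_k})\to 0$ gives $u_{n_k}\to u^\ast$ strongly in $H$.

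Then I would identify $u^\ast$ as the desired critical point. Since $\mathcal{N}$ is closed and $u_{n_k}\to u^\ast$, we have $u^\ast\in\mathcal{N}$; in particular $u^\ast\in K_{r,R}$, so $u^\ast\neq 0$. By continuity of $E'$ (which follows from $E$ being twice Fréchet differentiable, hence $C^1$), $E'(u_{n_k})\to E'(u^\ast)$; but $E'(u_{n_k})\to 0$, so $E'(u^\ast)=0$, i.e.\ $u^\ast$ is a critical point of $E$. Similarly, continuity of $E$ gives $E(u^\ast)=\lim_k E(u_{n_k})=\inf_{\mathcal{N}}E$, so $u^\ast$ realizes the infimum.

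The only point requiring a little care — and the step I expect to be the main (minor) obstacle — is the passage from boundedness of $\{u_n\}$ and complete continuity of $N$ to strong convergence of $\{u_n\}$ itself: one must pass to a subsequence for $N(u_n)$, and then exploit the identity $u_n=N(u_n)+(u_n-N(u_n))=N(u_n)+o(1)$ to transfer convergence back to $u_n$. Nothing beyond this is needed; closedness of $\mathcal{N}$ (already used in the proof of Theorem~\ref{th principala}) ensures the limit stays in the admissible set, and continuity of $E$ and $E'$ closes the argument.
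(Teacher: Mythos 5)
Your proposal is correct and follows essentially the same route as the paper: invoke Theorem~\ref{th principala} to get the Palais--Smale-type sequence, use boundedness of $\mathcal{N}\subset K_{r,R}$ and complete continuity of $N$ to extract a convergent subsequence of $\{N(u_n)\}$, transfer the convergence to $\{u_n\}$ via $u_n = N(u_n) + E'(u_n)$, and conclude by closedness of $\mathcal{N}$ and continuity of $E$ and $E'$. Your write-up is in fact slightly more explicit than the paper's about the final identification step.
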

\begin{proof}
    From assumptions (h1)-(h4), Theorem \ref{th principala} guarantees the existence of a sequence $\{u_n\}\subset \mathcal{N}$ such that \begin{equation}
     E(u_n)\leq \inf_{\mathcal{N}}E +\frac{1}{n}\quad \text{ and } \quad   E'(u_n)\to 0.\label{PS sequence}
    \end{equation}
    Since $N$ is completely continuous and $\{u_n\}$ is bounded (recall that $\mathcal{N}\subset K_{r,R}$), it follows that, after possibly passing to a subsequence, the sequence $\{N(u_n)\}$ is convergent to some $u^\ast\in H$. Now, taking the limit in \eqref{PS sequence}, one has that $u_n\to u^\ast$, hence $u^\ast\in \mathcal{N}$, $E(u^\ast)=\inf_{\mathcal{N}}E$ and $E'(u^\ast)=0$.
\end{proof}
Finally, we note that Theorem \ref{corolar} immediately yields multiplicity results if the hypotheses are satisfied for several finite or infinitely many pairs of numbers.
\begin{remark}[Multiplicity]
Let \(\{r_i\}_{1 \leq i \leq m}\) and \(\{R_i\}_{1 \leq i \leq m}\) be two sequences of positive real numbers satisfying 
\[
0 < r_1 < R_1 < r_2 < R_2 < \dots < r_m < R_m < \infty.
\]
For each pair \((r_i, R_i)\), we assume that condition \((h1)\) holds with \(s_i := s\). Additionally, suppose that conditions \((h2)\)–\((h4)\) hold for each Nehari manifold
\[
\mathcal{N}_i = \left\{ u \in K \setminus \{0\} \, : \, s_i(u)u \right\}, \quad i = 1, \dots, m,
\]
and that the operator \(N\) is completely continuous. Then, for each \(i = 1, \dots, m\), there exists  \(u^\ast_i \in \mathcal{N}_i\)  such that
\[
E(u^\ast_i) = \inf_{u \in \mathcal{N}_i} E(u) \quad \text{and} \quad E'(u^\ast_i) = 0.
\]

\end{remark}

\section{Application}

In this section, we present an application of Theorem \ref{corolar}. We aim to obtain a symmetric and positive solution for the Dirichlet problem
\begin{equation}\label{ec aplicatie}
    \begin{cases}
        -u^{\prime \prime }\left( t\right) =g(t)f\left( u\left( t\right) \right) \, \,
\text{ on }\left[ 0,1\right], \\ 
u\left( 0\right) =u\left( 1\right) =0,
    \end{cases}
\end{equation} 
where $f\colon \mathbb{R} \to \mathbb{R}$ is  continuously differentiable. In addition,  we suppose that $f$ is   nondecreasing and positive on $\mathbb{R}_+$. The function $g\colon [0,1]\to \mathbb{R}_+$ is assumed to be bounded,  nondecreasing on $[0,1/2]$ and symmetric with respect to the middle of the interval, i.e., 
\begin{equation*}
    g(t)=g(1-t) \quad \text{ for all }t\in [0,1/2].
\end{equation*}  Note that, since $g$ is nondecreasing and bounded on $[0,1/2]$, it is measurable on this interval, and by symmetry,  the same holds on the entire interval $[0,1]$.\newline
Consider the Sobolev space $H=H_0^1 (0,1)$ endowed with the inner product \begin{equation*}
    (u,v)_{H_0^1}=\int_0^1 u'(t)v'(t)dt,
\end{equation*} and the energetic norm $$|u|_{H_0^1}=\sqrt{\left( u,u\right)_{H_0^1}}. $$
We identify the space $H_0^1(0,1)$ with its dual $H^{-1}(0,1)$   via the mapping \begin{equation*}
    J\colon H_0^1 (0,1) \to  H^{-1}(0,1),\quad Ju=-u''.
\end{equation*}
Clearly, $J$ is invertible and its inverse $J^{-1}v$  is the weak solution of the Dirichlet problem $-u''(t)=v, \, u(0)=u(1)=0$. If $v\in L^2(0,1)$, then \begin{equation*}
    (J ^{-1}v)(t)=\int_0^1 G(t,s)v(s)ds,
\end{equation*}
where $G$ is the Green's function  of the differential operator $J$ with respect to
the boundary conditions $u(0)=u(1)=0$ (see, e.g., A. Cabada \cite[Example~1.8.18]{green}), $$    G(t,s)=\begin{cases}
     s\left( 1-t\right), s\leq t \\
       t\left( 1-s\right), s\geq t.
    \end{cases}$$
Moreover, given the continuous embedding of $H_0 ^1(0,1)$ in $C [0,1]$,  one has
\begin{equation}\label{scufundare}
   \sup_{t\in [0,1]} |u(t)|\leq |u|_{H_0^1}\, \, \text{ for all }u\in H_0^1(0,1).
\end{equation}
Additionally, the Wirtinger inequality holds \cite{brezis},
\begin{equation}\label{Wirtinger}
    \int_0 ^1 u^2(t)dt\leq \frac{1}{\pi^2} |u|_{H_0 ^1}^2.
\end{equation}
The energy functional of the problem \eqref{ec aplicatie} is $E\colon H_0^1(0,1)\to \mathbb{R},$
\begin{equation*}
E\left( u\right) =\frac{1}{2}\left\vert u\right\vert
_{H_{0}^{1}}^{2}-\int_{0}^{1}F\left( u(t)\right)g(t)  dt,
\end{equation*}
where 
$F\left( \xi\right)  =\int_{0}^{\xi }f\left(
s\right) ds.$
Clearly, the smoothness of $f$ implies that $E$ is a $ C^2$ functional.  Its first derivative is given by  $$    E'(u)=u-N(u)\quad \left( u\in H_0^1(0,1)\right), $$
where \begin{equation*}
     N(u)(t)=\int_0^1 G(t,s)  f(u(s)) g(s) ds \quad (u\in H_0^1(0,1)),
 \end{equation*}
while the  second derivative is expressed by \begin{equation*}
     E''(u)(w_1,w_2)=\left( w_1, w_2\right)_{H_0^1}-\int_0^1 f'(u(t)) 
 g(t) w_1(t)w_2(t)dt,
 \end{equation*}
 for  $u,w_1,w_2\in H_0^1(0,1).$

In $H_0 ^1 (0,1)$, we consider the cone
\begin{equation*}
    \begin{aligned}
K = \Big\{ u \in H_0^1(0,1) \, : \, & u \text{ is nondecreasing on } [0,1/2], \\
& u(t) = u(1-t)\text{ and } u(t) \geq \phi(t) |u|_{H_0^1} \text{ for all } t \in [0,1/2] \Big\},
\end{aligned}
\end{equation*}
where \begin{equation*}
   \phi \colon [0,1/2] \to \mathbb{R}, \text{ \quad\(\phi(t) = t(1 - 2t)\).}
\end{equation*} Note that the function $u(t)=\sin(\pi t)$ belongs to $K$, so the cone $K$ is nondegenerate.

We claim that operator $N$ is invariant with respect to $K$, i.e., $$N(K)\subset K.$$
An important result in proving our claim is the 
following Harnack-type inequality obtained in \cite[Lemma 3.1]{precup}.
\begin{lemma}\label{lema Harnack}
   For every function \( u \in K \), with \( Ju \in C([0,1]; \mathbb{R}^+) \) nondecreasing on \([0,1/2]\), one has
\begin{equation}\label{inegalitate Harnack}
u(t) \geq \phi(t) |u|_{H_0 ^1} \quad \text{for all } t \in [0,1/2].
\end{equation}
\end{lemma}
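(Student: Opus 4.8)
The plan is to read the inequality off the one-dimensional ODE structure of $u$, after reducing it to two elementary calculus estimates; the only properties of $u$ that will be used are the symmetry $u(t)=u(1-t)$ and the hypothesis that $Ju=-u''$ is continuous, nonnegative, and nondecreasing on $[0,1/2]$. Write $h:=Ju=-u''$. Since $h$ is continuous on $[0,1]$, $u$ is of class $C^2$, and since $h\ge 0$, $u''\le 0$ on $[0,1]$, so $u'$ is nonincreasing on $[0,1]$. Differentiating $u(t)=u(1-t)$ gives $u'(t)=-u'(1-t)$, hence $u'(1/2)=0$; combined with the monotonicity of $u'$ this yields $0\le u'(s)\le u'(0)=:a$ for $s\in[0,1/2]$ (and $u'\le 0$ on $[1/2,1]$, so in particular $u\ge 0$ and $u$ is automatically nondecreasing on $[0,1/2]$). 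If $a=0$ then $u\equiv 0$ and there is nothing to prove, so I may assume $a>0$.

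The first step is the upper bound $|u|_{H_0^1}\le a$. Using that $(u')^2$ is symmetric about $t=1/2$ and that $u'(s)\le a$ on $[0,1/2]$,
\[
|u|_{H_0^1}^2=\int_0^1\bigl(u'(s)\bigr)^2\,ds=2\int_0^{1/2}\bigl(u'(s)\bigr)^2\,ds\le 2a\int_0^{1/2}u'(s)\,ds=2a\,u(1/2),
\]
and since $u(1/2)=\int_0^{1/2}u'(s)\,ds\le a/2$, this gives $|u|_{H_0^1}^2\le a^2$, i.e. $|u|_{H_0^1}\le a=u'(0)$.

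The second step is the lower bound $u(t)\ge a\,\phi(t)$ on $[0,1/2]$. The hypothesis that $h$ is nondecreasing on $[0,1/2]$ means that $u''=-h$ is nonincreasing there, i.e. $u'$ is concave on $[0,1/2]$; together with $u'(0)=a$ and $u'(1/2)=0$, the chord inequality for concave functions gives $u'(s)\ge a(1-2s)$ for $s\in[0,1/2]$. Integrating on $[0,t]$,
\[
u(t)=\int_0^t u'(s)\,ds\ge a\int_0^t(1-2s)\,ds=a\,t(1-t)\ge a\,t(1-2t)=a\,\phi(t),\qquad t\in[0,1/2],
\]
where the last step uses $1-t\ge 1-2t$. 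Combining the two steps together with $\phi\ge 0$ yields $u(t)\ge a\,\phi(t)\ge |u|_{H_0^1}\,\phi(t)$ for all $t\in[0,1/2]$, which is \eqref{inegalitate Harnack}.

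I do not expect a genuine obstacle: once the reduction is set up, the argument is elementary calculus. The only point deserving care is that initial reduction, namely checking that the hypotheses really do force $u\in C^2$, $u'(1/2)=0$, and concavity of $u'$ on $[0,1/2]$ — the three displayed estimates rest, respectively, on $u'$ being nonincreasing (a consequence of $Ju\ge 0$), on the symmetry of $u$, and on $u'$ being concave on $[0,1/2]$ (a consequence of $Ju$ being nondecreasing there).
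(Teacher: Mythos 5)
Your proof is correct and follows essentially the same route as the paper: both arguments rest on $u'(1/2)=0$ from symmetry, the concavity of $u'$ on $[0,1/2]$ (from $Ju$ nondecreasing) giving $u'(s)\ge (1-2s)u'(0)$, and the bound $|u|_{H_0^1}\le u'(0)$ from $u'$ being nonincreasing. The only cosmetic difference is that you integrate the chord inequality for $u'$ directly (obtaining the slightly sharper intermediate bound $u'(0)\,t(1-t)$), whereas the paper first writes $u(t)\ge t\,u'(t)$ and then applies the chord inequality pointwise.
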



Let $u\in K$.  From the definition of $N$, one has $$J(N(u))=g(\cdot)f(u)\in C[0,1].$$ Simple computations show that the Green's function satisfies \begin{equation*}
    G(t,s)=G(1-t,1-s) \quad\text{ and }\quad G(t,1-s)=G(1-t,s), \, 
\end{equation*} for all $t,s\in [0,1/2]$. Thus, from the symmetry of $u$, for any $t\in [0,1/2]$, we obtain
\begin{align*}
    N(u)(t)&=\int_0^{1/2}G(t,s)f(u(1-s)) g(1-s) ds+\int_0^{1/2}G(t,1-s)f(u(s)) 
 g(s)ds\\&=
    \int_{1/2}^{1}G(1-t,s)f(u(s)) g(s)ds+\int_0^{1/2}G(1-t,s)f(u(s)) g(s)ds\\&
    =N(u(1-t)),
\end{align*}
which proves that $N(u)$ is  symmetric.
 Since  $u$ takes positive values and both $f$ and $g$ are nonnegative, it follows that $N(u)$ is concave, hence \(N(u)\) is nondecreasing on \([0,1/2]\). Furthermore, the monotonicity of \(f\) on \(\mathbb{R}_+\) and of $u$ and $g$ on $[0,1/2]$ ensures that \(J(N(u))=g(\cdot)f(u)\) is also nondecreasing on \([0,1/2]\).  Therefore, by Lemma \ref{lema Harnack},  inequality \eqref{inegalitate Harnack} holds for \(N(u)\). Thus, \(N(u) \in K\), as desired.

For the sake of completeness, below we provide a  proof of Lemma \ref{lema Harnack} shorter than the one given in \cite{precup}.
\begin{proof}[Proof of Lemma \ref{lema Harnack}]
 From the hypothesis, it follows directly that both \( u \) and \( u' \) are positive and concave on \( [0,1/2] \). Moreover, the symmetry of \( u \) ensures that \( u \) is increasing on \( [0,1/2] \), while \( u' \) is decreasing, with \( u'(1/2) = 0 \).  For any $t\in (0,1/2)$, the monotonicity of $u'$ implies
 \begin{equation}\label{Harnack1}
     u(t)=\int_0^t u'(s)ds\geq tu'(t),
 \end{equation}
 while its concavity yields
 \begin{align}\label{Harnack2}
     u'(t)&=u'\left(2t\,\frac{1}{2} +(1-2t)0 \right)\geq 2t u'\left(\frac{1}{2}\right)+(1-2t)u' (0)\\&=(1-2t)u' (0).
\nonumber \end{align}
 Finally, we obtain the conclusion using  \eqref{Harnack1}, \eqref{Harnack2} and 
 \begin{equation*}
     |u|_{H_0^1 }^2 =\int_0^1 u'(s)^2ds=2\int_0^{1/2} u'(s)^2ds\leq u'(0)^2.
 \end{equation*}
\end{proof}
Let $0<r<R<\infty$ be positive real numbers,  $\beta\in (0,1/4)$, and define
\begin{equation*}
    \Tilde{A}=\left(\int_0 ^1 g^2(t)dt\right)^{1/2},\quad \Tilde{B}= \int_0^\beta g(t)dt \quad\text{ and } \quad\Tilde{C}=\int_{\beta}^{1/2} g(t)dt.
\end{equation*}
In what follows, we assume
\begin{enumerate}
    \item[\textbf{(H1):}] The constants $\Tilde{A}$ and $\Tilde{C}$ are strictly positive, and moreover
    \begin{equation*}
        \frac{f(r)}{r} < \frac{\pi}{\Tilde{A}} \quad \text{and} \quad \frac{f\left( R \phi(\beta) \right)}{R} > \frac{1}{2\phi(\beta)\Tilde{C}}.
    \end{equation*} 
\end{enumerate}
Additionally, suppose that the function 
$f$ satisfies one of the following three  conditions:
\begin{enumerate}
\item[\textbf{(H2):}] There exists a continuous  mapping $\theta\colon [0,R]\to\mathbb{R}$ such that $\theta(t)>0$ for $t\in (0,R]$, and \begin{equation*}
    tf'(t)-f(t)\geq \theta(t) \, \,\text{ for all }t \in [0,R];
\end{equation*}  \end{enumerate}

\begin{enumerate}
    \item[\textbf{(H3):}] There exists constants $\mu=\mu(r,R)>1$ and $\lambda=\lambda(r,R)>0$ such that 
    \begin{equation*}\label{AR}
        tf'(t)-\mu f(t)\geq 0 \text{ and } f'(t)\geq \lambda \, \, \text{ for all }t\in [r\phi(\beta),R],
    \end{equation*}
   and
     \begin{equation}\label{estimare mu,lambda}
    \Tilde{B}f(r\phi(\beta))<\lambda\Tilde{C}\left( 1-\frac{1}{\mu}\right)r \phi(\beta);
    \end{equation}
\end{enumerate}
 \begin{enumerate}
    \item[\textbf{(H4):}] The  support of the function \( g \) is included in \( [\beta,1/2] \), i.e.,  
    \begin{equation}\label{g=0}
        g(t) = 0 \quad \text{ for all } t \in [0,\beta].
    \end{equation}  
    Additionally, the function \( f \) is of class \( C^2 \) on \( [r\phi(\beta), R] \) with a strictly positive second derivative, that is, there exists \( M = M(r,R) > 0 \) with  
    \begin{equation*}
        f''(t) \geq M \quad \text{ for all } t \in [r\phi(\beta), R].
    \end{equation*}  
    Furthermore, there exists a continuous positive mapping \( \Tilde{\theta} \colon [r\phi(\beta), R] \to \mathbb{R} \) such that  
    \begin{equation}\label{AR coroana}
        tf'(t) - f(t) \geq \Tilde{\theta}(t) \quad \text{ for all } t \in [r\phi(\beta), R].
    \end{equation}  
\end{enumerate}
We show that under condition (H1) and either (H2), (H3) or (H4), conditions (h1)-(h4) are satisfied.
Note that (h2) and (h3) follow directly from \eqref{scufundare}, Wirtinger's inequality, and the smoothness of the function $f$.  

To prove (h1),
let $u\in K\setminus\{0\}$.  The mapping $\alpha_u$ introduced in  (h1) is given by
\begin{equation*}
    \alpha_u (\tau)=\frac{1}{2}\tau^2|u|_{H_0 ^1}^2-\int_0^1F(\tau u(t)) g(t) dt, \quad \tau\in \left(\frac{r}{|u|_{H_0 ^1}},\frac{R}{|u|_{H_0 ^1}} \right)
\end{equation*}
and its derivative is \begin{align*}
     \alpha_u '(\tau)&=\tau |u|_{H_0 ^1 }^2-\int_0^1 f(\tau u(t)) g(t) u(t)dt.
\end{align*}
First, we show that \begin{equation}\label{capete}
    \text{$\alpha_u'\left(\frac{r}{|u|_{H_0 ^1}} \right)>0$ \quad and \quad $\alpha_u'\left(\frac{R}{|u|_{H_0 ^1}} \right)<0$}.
\end{equation}  Denote $$v(t)=\frac{u(t)}{|u|_{H _0 ^1}}\in K\setminus \{0\},$$ so  $|v|_{H_0 ^1}=1$. Then,  we have 
\begin{align*}
    \alpha_u'\left(\frac{\tau}{|u|_{H_0 ^1}} \right)=\tau|u|_{H _0 ^1} \left(1-\int_0^1 \frac{1}{\tau }f\left( \tau v(t)\right)g(t)v(t)dt\right).
\end{align*}
  From \eqref{scufundare},  (H1) and both Wirtinger and Hölder inequalities, one has
\begin{align*}
 &   \int_0^1 \frac{1}{r}f\left( rv(t)\right)  g(t) v(t)dt\leq \int_0^1\frac{f\left( r\right)}{r}  g(t)   v(t)dt\\&
    \leq \frac{f(r)}{  r}\left(\int_0 ^1 g^2(t)dt\right)^{1/2}\left(\int_0 ^1 v^2(t)dt\right)^{1/2}\\&
    \leq \frac{f(r)}{  r}\frac{ \Tilde{A}}{\pi} |v|_{H_0 ^1}=\frac{f(r)}{  r}\frac{ \Tilde{A}}{\pi}<1,
\end{align*}
which proves the first inequality in \eqref{capete}. For the second one, using the  monotonicity of $v$ and the Harnack inequality, we see that
\begin{equation}\label{inegalitate phi}
    v(t)\geq \phi(\beta)\quad \text{ for all }t\in [\beta,1/2].  
\end{equation}
Therefore, from (H1), the symmetry of $v$ and \eqref{inegalitate phi}, we obtain
 \begin{align*}
     \int_0^1 \frac{f\left( R v(t)\right)}{R} g(t) v(t)dt&= 2\int_0 ^{1/2} 
 \frac{f(Rv(t))}{R}  g(t) v(t)dt 
 \geq 2\int_{\beta}^{1/2} \frac{f(Rv(t))}{R}  g(t)v(t)dt \\& \geq  2\int_{\beta}^{1/2} \frac{f(R\phi(t))}{R}  g(t)\phi(t)dt 
 \geq 2  \phi(\beta) \Tilde{C} \frac{f\left(R\phi(\beta)\right)}{R}>1,
 \end{align*}
whence relation \eqref{capete} holds. 

Define $\sigma=\tau |u|_{H_0 ^1}$ and the functions $h,\Tilde{h}\colon [r,R]\to \mathbb{R}$,  \begin{align*}
   & h(\sigma  )=1-\int_0^1\frac{f(\sigma  v(t))}{\sigma  } g(t)  v(t)dt,\\&  \Tilde{h}(\sigma)=\sigma -\int_0^1f(\sigma  v(t)) g(t)  v(t)dt.
\end{align*} Clearly, $h$ and $\Tilde{h}$ are dependent on the chosen $u$, so  one should write $h_u$ and $\Tilde{h}_u$, respectively. However, for simplicity, we omit the subscript in the following analysis, as the dependence on \( u \) is clear from the context.

We now show that the function $h$ is decreasing under either (H2) or (H3), while the function $\Tilde{h}$ is concave  under (H4).

\textit{(a)}
In  case (H2) holds, the mapping 
\begin{equation*}
    t\mapsto \frac{f(t)}{t}\, \text{ is increasing on }(0,R].
\end{equation*}
The conclusion now follows directly,  as    $\sigma v(t)\in [0,R]$ for all $t\in [0,1]$ and  $\sigma\in [r,R]$.
 
\textit{(b)} 
Under condition (H3),  we prove that $h'(\sigma)<0$ for all $\sigma\in [r,R]$. Using the symmetry of $v$, a straightforward computation yields
\begin{equation*}
    h'(\sigma )=\frac{2}{\sigma^2}\int_0 ^{1/2}\left(  f(\sigma  v(t))-f'(\sigma  v(t)) \sigma  v(t)\right) g(t) v(t)dt.
\end{equation*} Let $\mathcal{B}$ denote the set
 \begin{equation*}
    \mathcal{B}=\{t\in [0,1/2]\, :\, \sigma  v(t)\leq r\phi(\beta)\}.
\end{equation*}
 From \eqref{inegalitate phi}, it follows that \begin{equation}\label{incluziune A}
    \mathcal{B}\subset [0,\beta] \quad\text{ and } \quad [\beta,1/2]\subset [0,1/2]\setminus  \mathcal{B}.
\end{equation} 
Since the derivative of $f$ is nonnegative, one has
\begin{align*}
    \frac{2}{\sigma^2}\int_{ \mathcal{B}} \left(  f(\sigma  v(t))-f'(\sigma  v(t)) \sigma  v(t)\right) g(t) v(t)dt &\leq  \frac{2}{\sigma^2}\int_ {\mathcal{B}}f(\sigma  v(t))g(t)v(t)dt.
\end{align*}
Now, using  the monotonicity of  $v$ and $f$, along with  \eqref{inegalitate phi} and \eqref{incluziune A}, we obtain
\begin{equation}\label{ineq1H3}
         \frac{2}{\sigma^2}\int_ {\mathcal{B}}f(\sigma  v(t))g(t)v(t)dt= \frac{2}{\sigma^3}\int_ {\mathcal{B}}f(\sigma  v(t))g(t)\sigma v(t)dt
    \leq \frac{2\Tilde{B}}{\sigma ^3} f(r\phi(\beta))r\phi(\beta).
\end{equation}
On the other hand, from (H3), since $\sigma v(t)\in [r\phi(\beta), R]$ for all $t\in [0,1/2]\setminus \mathcal{B}$, we derive 
\begin{align}\label{ineq2H3}
   & \frac{2}{\sigma ^2}\int_{[0,1/2]\setminus \mathcal{B}}\left( f'(\sigma v(t))\sigma v(t)-f(\sigma v(t)) \right)g(t)v(t)dt \\& \nonumber\geq 
   \frac{2}{\sigma ^3}\left( 1-\frac{1}{\mu}\right)\int_{[0,1/2]\setminus \mathcal{B}} g(t)f'(\sigma v(t))\sigma^2 v(t)^2dt\\&  \nonumber \geq \frac{2}{\sigma ^3}\left( 1-\frac{1}{\mu}\right)\int_{\beta}^{1/2-\beta}g(t) f'(\sigma v(t))\sigma ^2v(t)^2dt\\& \geq \frac{2}{\sigma ^3}\lambda\left( 1-\frac{1}{\mu}\right)\int_{\beta}^{1/2-\beta}g(t)\sigma ^2v(t)^2dt  \geq \frac{2   \Tilde{C}}{\sigma ^3}\lambda\left( 1-\frac{1}{\mu}\right)r^2 \phi(\beta)^2.\nonumber
\end{align}
Whence, the above two inequalities \eqref{ineq1H3} and \eqref{ineq2H3},  together with   \eqref{estimare mu,lambda}, yield
\begin{equation*}
      h'(\sigma )\leq \frac{2\Tilde{B}}{\sigma ^3} f(r\phi(\beta))r\phi(\beta)-\frac{2\Tilde{C}}{\sigma ^3}\lambda\left( 1-\frac{1}{\mu}\right)r^2 \phi(\beta)^2<0,
\end{equation*} 
as expected.

\textit{(c)} Assume that (H4) holds, and let $\sigma\in [r, R]$. Given this assumption and the symmetry of \( v \), we obtain
\[
\widetilde{h}(\sigma) = \sigma - 2 \int_{\beta}^{1/2} f(\sigma v(t)) g(t) v(t) \, dt.
\]
Since \( \sigma v(t) \in [r \phi(\beta), R] \) for all \( t \in [\beta, 1/2] \) by \eqref{inegalitate phi}, it follows that \( \widetilde{h} \) is of class \( C^2[r, R] \).
From (H4), one has
\[
\widetilde{h}''(\sigma) = -2 \int_\beta^{1/2} f''(\sigma v(t)) g(t) v(t)^3 \, dt \leq -2 \widetilde{C} \phi(\beta)^3 \min_{[r \phi(\beta), R]} f''(\cdot) < 0,
\]
which shows that \( \widetilde{h} \) is strictly concave on $[r,R]$.

Consequently, in all three cases, from \eqref{capete}, it follows immediately that $\alpha_u '(\tau)$ has exactly one zero within the interval  $\left(\frac{r}{|u|_{H_0  ^1}},\frac{R}{|u|_{H_0  ^1}} \right)$. Moreover, $\alpha_u '$ is positive on $\left( \frac{r}{|u|_{H_0  ^1}}, s(u)\right)$, while $\alpha'_u$ is negative on $\left(s(u), \frac{R}{|u|_{H_0  ^1}}\right)$, so  condition (h1) is verified.
\newline
To prove the last assertion (h4),  assume $u\in \mathcal{N}$. Thus, $r<|u|_{H_0 ^1}<R$ and
\begin{equation*}
    |u|_{H_0^1}^2=\int_0^1f(u(t))g(t)u(t)dt.
\end{equation*}
\textit{(a)'} 
In case that  (H2) holds, since $u(t)\geq \phi(\beta) |u|_{H_0 ^1}\geq r \phi(\beta)$ by \eqref{inegalitate phi}, we compute
 \begin{align*}
     E''(u)(u, u)&=|u|^2_{H_0^1}-\int_0^1 f'(u(t))g(t)u^2(t)dt
   \\&  =\int_0^1 f(u(t))g(t)u(t)dt-\int_0^1  f'(u(t))g(t)u(t)^2dt\\&
      \leq -2\int_0 ^{1/2}\theta(u(t)) g(t)u(t)dt
     \\& \leq -2r\phi(\beta) \int_{\beta}^{1/2}  \theta(u(t)) g(t)dt \\&\leq
-2\Tilde{C} r\phi(\beta) \min_{[r\phi(\beta),R]}\theta(\cdot) <0.
 \end{align*}
 \textit{(b)'} Assume (H3) is satisfied. Then, taking $\sigma =|u|_{H_0 ^1 }$ and performing the same computations as in the estimates for the derivative of $h$ in (b), we have
\begin{align*}
     E''(u)(u, u)&=2\int_0 ^{1/2}\left( f(u(t))- f'( u(t)) u(t)\right) g(t)u(t) dt
    \\& 
   \leq 2\int_0^{\beta} f(u(t))g(t)u(t)dt+2\int_{\beta}^{1/2} \left( f(u(t))- f'( u(t)) u(t)\right) g(t)u(t) dt\\&
   \leq 2\int_0^{\beta} f(u(t))g(t)u(t)dt-2\left( 1-\frac{1}{\mu}\right)\int_{\beta}^{1/2}  f'( u(t))  g(t)u(t)^2 dt\\&
    \leq 2r\phi(\beta) \left(\Tilde{B}f(r\phi(\beta))-\lambda \Tilde{C}\left(1-\frac{1}{\mu} \right){r\phi(\beta)} \right)<0,
\end{align*}
where the latter inequality follows from \eqref{estimare mu,lambda}.
\newline
\textit{(c)'} For the last case, using \eqref{g=0} from (H4) and  the symmetry of $v$, we see that  
\begin{align*}
       E''(u)(u, u)&=|u|^2_{H_0^1}-2\int_{\beta}^{1/2} f'(u(t))g(t)u(t)^2dt\\&=2\int_{\beta}^{1/2} f(u(t))g(t)u(t)dt-2\int_{\beta}^{1/2} f'(u(t))g(t)u(t)^2dt.
\end{align*}
Since $u(t)\in [r\phi(\beta), R]$ for all $t\in [\beta, 1/2]$, the conclusion follows immediately by an argument similar to that in (a)' , that is, 
\begin{equation*}
     E''(u)(u, u)\leq -2\Tilde{C} r\phi(\beta) \min_{[r\phi(\beta),R]}\Tilde{\theta}(\cdot) <0.
\end{equation*}

Therefore, in all three cases, condition (h4) is verified.
Summing up, we have the following result.

\begin{theorem}\label{teorema aplicatie}
    Assume (H1) and either (H2), (H3) or (H4) hold true. Then,   problem \eqref{ec aplicatie} has a solution $u^\ast\in K$ such that $r<|u^{\ast}|_{H_0 ^1}<R$.
\end{theorem}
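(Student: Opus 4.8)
The plan is to assemble the verifications carried out in the preceding discussion and then invoke the abstract critical point principle of Theorem~\ref{corolar}. First I would record what has already been established: under (H1) together with any one of (H2), (H3) or (H4), the functional $E$ and the cone $K$ satisfy hypotheses (h1)--(h4). Indeed, (h2) and (h3) were obtained from the embedding \eqref{scufundare}, Wirtinger's inequality \eqref{Wirtinger} and the smoothness of $f$; (h1) followed from the sign information \eqref{capete} at the endpoints combined with the monotonicity of $h$ under (H2) or (H3) (respectively the strict concavity of $\widetilde h$ under (H4)); and (h4) was checked case by case in parts (a)$'$--(c)$'$. It was also shown that $N(K)\subset K$.

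Second, I would verify the one remaining hypothesis of Theorem~\ref{corolar}, namely that the operator $N$ is completely continuous. Let $\{u_n\}\subset H_0^1(0,1)$ be bounded. By the compactness of the embedding $H_0^1(0,1)\hookrightarrow C[0,1]$, a subsequence converges uniformly to some $u\in C[0,1]$; since $f$ is continuous and $g$ is bounded (and measurable), $g(\cdot)f(u_n(\cdot))\to g(\cdot)f(u(\cdot))$ in $L^2(0,1)$. As $J^{-1}\colon L^2(0,1)\to H_0^1(0,1)$ is bounded and $N(u)=J^{-1}\bigl(g(\cdot)f(u)\bigr)$, we get $N(u_n)\to J^{-1}\bigl(g(\cdot)f(u)\bigr)$ in $H_0^1(0,1)$, so $N$ is completely continuous.

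Third, Theorem~\ref{corolar} applies and yields $u^\ast\in\mathcal N$ with $E(u^\ast)=\inf_{\mathcal N}E$ and $E'(u^\ast)=0$. Since $E'(u)=u-N(u)$, the critical point equation reads $u^\ast=N(u^\ast)$, i.e.\ $Ju^\ast=g(\cdot)f(u^\ast)$, so $u^\ast$ is a (weak, hence $H^2$) solution of \eqref{ec aplicatie}. Finally, $u^\ast\in\mathcal N$ means $u^\ast=s(w)\,w$ for some $w\in K\setminus\{0\}$; the property $\mathbb R_+K\subset K$ gives $u^\ast\in K$, and by (h1) we have $s(w)\in\bigl(r/|w|_{H_0^1},\,R/|w|_{H_0^1}\bigr)$, whence $|u^\ast|_{H_0^1}=s(w)\,|w|_{H_0^1}\in(r,R)$ with strict inequalities, as claimed.

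I expect no genuine obstacle here: essentially all the difficulty was already absorbed into the verification of (h1) and (h4) (the three-fold case analysis on the structure of $f$), and the only new ingredient, the complete continuity of $N$, is routine given the compact Sobolev embedding. The sole point deserving a word of care is that $g$ is merely bounded and measurable rather than continuous, but boundedness is exactly what is needed to pass to the $L^2$ limit for $g(\cdot)f(u_n)$ above, so this causes no trouble.
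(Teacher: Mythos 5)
Your proposal is correct and follows essentially the same route as the paper: assemble the already--verified hypotheses (h1)--(h4), check that $N$ is completely continuous, and invoke Theorem~\ref{corolar}; the only cosmetic difference is that you obtain compactness of $N$ via the compact embedding $H_0^1(0,1)\hookrightarrow C[0,1]$, whereas the paper cites the compact embedding $L^2(0,1)\hookrightarrow H^{-1}(0,1)$. Your closing remark making explicit why $|u^\ast|_{H_0^1}$ lies strictly between $r$ and $R$ is a welcome detail the paper leaves implicit.
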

\begin{proof}
    As established  above, assumptions (h1)-(h4) are verified, thus Theorem \ref{th principala} guarantees the existence of a sequence $u_n\in \mathcal{N}$ such that \begin{equation*}
        E(u_n)\to \inf_{\mathcal{N}}E \quad \text{ and }\quad E'(u_n)\to 0.
    \end{equation*}
Since \( L^2(0,1) \)  embeds compactly into \( H^{-1}(0,1) \), the operator \( N \) is completely continuous from \( H_0^1(0,1) \) into itself (see, e.g., \cite{p4}); therefore  Theorem \ref{corolar} applies and gives the conclusion. 
\end{proof}

It is worth providing some commentaries on the  conditions (H2), (H3), and (H4). 

\begin{description}
    \item[(i)] The first condition, (H2), naturally extends the standard condition obtained via the Nehari manifold method over the entire domain. For instance, let \( a > 0 \), \( p > 1 \), $g\equiv 1$ and choose \( 0 < r < R < \infty \) such that
\begin{equation}\label{cond exemplu}
    a < \frac{\pi}{r^{p}} \quad \text{and} \quad a > \frac{1}{R^{p-1} \beta^{p+1} (1 - 2\beta)^{p+2} },
\end{equation}
for some \( \beta \in (0, 1/4) \). Then, the function \( f(t) = a t^{p} \) satisfies (H1) and (H2). Indeed, conditions \eqref{cond exemplu} ensure (H1), while (H2) follows directly since
\[
t f'(t) - f(t) = a (p - 1) t^{p}.
\]
However, it is not difficult to see that this setup does not lead to multiplicity. To see this, suppose (H2) holds for two pairs of points \( (r_1, R_1) \) and \( (r_2, R_2) \) with \( r_1 < R_1 < r_2 < R_2 \). Then, as the map $f(t)/t$ is increasing on $[0,r_2]$ (see (a)), we have
\[
\frac{f(R_1)}{R_1} < \frac{f(r_2)}{r_2} < \frac{\pi}{\tilde{A}},
\]
which implies \( \alpha_u\left( \frac{R_1}{|u|_{H_0^1}} \right) > 0 \) for all \( u \in K \setminus \{0\} \), contrary to \eqref{capete}.
\item[(ii)]
Conditions (H3) and (H4) may lead to multiplicity, as they impose restrictions intrinsically linked to the values of \(r\) and \(R\). Both conditions require an Ambrosetti-Rabinowitz-type assumption, which is often encountered in the study of the existence of solutions for nonlinear equations.  

Additionally, (H3) includes an extra requirement on the derivative of $f$ and the associated coefficients, while (H4) assumes that the function \(g\) vanishes on a small interval starting from 0 and that the function \(f\) is strictly convex on the interval \([r\phi(\beta), R]\).  
We also remark that the three conditions (H2), (H3), and (H4) can be applied independently  to different pairs \((r_i, R_i)\). However, it is important to note that if (H2) is used, it should only be applied to the first pair \((r_1, R_1)\), as remark (i) shows.
\end{description}

\section*{Acknowledgements}
The author expresses gratitude to Prof. Radu Precup for his valuable insights on the subject and thorough verification of the paper.

\end{document}